\documentclass[11pt]{article}
\usepackage{amsthm, amsmath, amssymb, amsfonts, url, booktabs, tikz, setspace, fancyhdr, bm}
\usepackage{geometry}
\usepackage{hyperref, enumerate}
\usepackage[shortlabels]{enumitem}
\usepackage[babel]{microtype}
\usepackage[english]{babel}
\usepackage[capitalise]{cleveref}
\usepackage{comment}
\usepackage{bbm}
\usepackage{csquotes}
\usepackage{mathabx}
\usepackage{tikz}
\usepackage{graphicx}
\usepackage{float}
\usepackage[dvipsnames]{xcolor}
\usepackage{soul}
\usepackage{mathtools}
\usetikzlibrary{positioning, arrows.meta, shapes.geometric}
\usepackage[normalem]{ulem}
\counterwithin{figure}{section}

\newtheorem{theorem}{Theorem}[section]
\newtheorem{prop}[theorem]{Proposition}
\newtheorem{conj}[theorem]{Conjecture}
\newtheorem{lemma}[theorem]{Lemma}
\newtheorem{cor}[theorem]{Corollary}

\usetikzlibrary{decorations.pathmorphing}
\theoremstyle{definition}

\newtheorem*{defn-non}{Definition}

\newlist{Case}{enumerate}{2}
\setlist[Case, 1]{%
label = {\bfseries Case \arabic*.},
labelindent=1em ,labelwidth=1.3cm, labelsep*=1em, leftmargin =!
}
\setlist[Case, 2]{%
label = {\bfseries Subcase \arabic{Casei}.\arabic*.},
labelindent=-1em ,labelwidth=1.3cm, labelsep*=1em, leftmargin =!
}

\usepackage{todonotes}

\title{Layer barriers for colour-biased tight Hamilton cycles}
\author{
Zijian Deng\thanks{School of Mathematics and Statistics, Lingnan Normal University, Zhanjiang, China. E-mail: zjl329205716@163.com}.
\and
Qinfei Tang\thanks{Department of Mathematics Research, Fujian Institute of Education, Fuzhou, China. E-mail: tqf9500@126.com}.
\and
Caihong Yang\thanks{School of Mathematics and Physics, China University of Geosciences, Wuhan, China. Email: yangch@cug.edu.cn.}
}
\date{}

\begin{document}
\maketitle

\begin{abstract}
We construct a family of layer barriers for colour-biased tight Hamilton
cycles in uniform hypergraphs. For every $k\ge 3$ and every
$a\in\{0,\ldots,k-1\}$, we give a red--blue coloured $k$-graph that
contains a tight Hamilton cycle, while every tight Hamilton cycle in the
construction is perfectly colour-balanced. The construction underlying
the higher-uniformity threshold conjectured by Behague, Clemen, Hyde and
Morrison corresponds to the boundary case $a=0$ of this family. We show
that interior choices of $a$ can yield strictly denser barriers. In
particular, for $k=17$ and $a=8$, the asymptotic relative minimum vertex
degree of our construction is
\[
    \frac{5761}{8192}\approx 0.703247,
\]
which exceeds the conjectured value
$d_{17}\approx 0.699277$. This provides a counterexample to the proposed
higher-uniformity threshold in Conjecture~6.1 of Behague, Clemen, Hyde
and Morrison. Moreover, by choosing the layer appropriately as
$k\to\infty$, the family contains barriers whose asymptotic relative
minimum vertex degree is
\[
    1-O\bigl(k^{-1/2}\bigr).
\]
Thus the interior members of the layer-barrier family exhibit
substantially different behaviour from the previously considered
boundary construction in large uniformity.
\end{abstract}

\noindent\textbf{Keywords.} Uniform hypergraphs, tight Hamilton cycles, discrepancy, minimum vertex degree.

\section{Introduction}\label{sec:intro}

Discrepancy questions for spanning structures ask for quantitative strengthenings of classical existence theorems: once a host graph or hypergraph is sufficiently dense to contain a prescribed spanning object, must every edge-colouring contain such an object whose colour distribution is substantially non-uniform? This viewpoint has led to discrepancy versions of a number of extremal and Hamiltonicity results. In graphs, recent work has treated spanning trees and Hamilton cycles \cite{balogh2020discrepancies,gishboliner2022spanning}, colour-biased Hamilton cycles in dense and random graphs \cite{freschi2021colour,gishboliner2022random}, powers of Hamilton cycles \cite{bradac2022powers}, and general $H$-factors \cite{bradac2024factors}. These results show that the threshold forcing a spanning structure and the threshold forcing a linearly colour-biased copy of that structure may coincide in some settings and differ in others. In a complementary direction, Chen, Rong and Xu~\cite{CRX2025} established an optimal stability result for colour-biased Hamilton cycles in edge-coloured graphs. They showed that, under a minimum-degree condition close to the threshold for Hamiltonicity, if every Hamilton cycle has small colour-bias, then the coloured graph must be structurally close to the known extremal constructions. Their result reveals that structural stability can already emerge well below the extremal threshold for forcing colour imbalance.

The analogous programme for hypergraphs has developed rapidly. Colour-biased perfect matchings have been studied under several minimum-degree regimes \cite{balogh2024note,han2025colour}, while Gishboliner, Glock and Sgueglia \cite{gishboliner2025tight} determined the asymptotically optimal minimum codegree threshold for tight Hamilton cycles with high discrepancy in $r$-edge-coloured $k$-graphs for every $k\ge3$ and $r\ge2$. Minimum vertex degree is substantially more delicate. Even in the uncoloured setting, sharp or asymptotically sharp vertex-degree conditions for tight Hamilton cycles require significant structural work; see Reiher, R\"odl, Ruci\'nski, Schacht and Szemer\'edi \cite{reiher2019minimum} for $3$-graphs and Lang and Sanhueza-Matamala \cite{lang2022minimum} for a broader minimum-degree framework.

We use the following notation. A $k$-uniform hypergraph, or $k$-graph, is a hypergraph in which every edge has size $k$. For a $k$-graph $H$ and $1\le \ell\le k-1$, the minimum $\ell$-degree of $H$ is
\[
\delta_\ell(H):=\min_{\substack{S\subseteq V(H)\\ |S|=\ell}}
\bigl|\{e\in E(H):S\subseteq e\}\bigr|.
\]
In particular, $\delta_1(H)$ is the minimum vertex degree. A tight Hamilton cycle in an $n$-vertex $k$-graph is a cyclic ordering $v_1,\ldots,v_n$ of its vertices such that $\{v_i,v_{i+1},\ldots,v_{i+k-1}\}\in E(H)$ for every $i$, where subscripts are taken modulo $n$. Suppose that the edges of $H$ are coloured red and blue. Encode the colouring by
\[
c(e):=
\begin{cases}
1,&\text{if $e$ is red},\\
-1,&\text{if $e$ is blue},
\end{cases}
\qquad
c(F):=\sum_{e\in E(F)}c(e)
\]
for every subgraph $F\subseteq H$. Since a tight Hamilton cycle $C$ has exactly $n$ edges, it contains at least $(1/2+\delta)n$ edges of one colour if and only if $|c(C)|\ge 2\delta n$.

Behague, Clemen, Hyde and Morrison \cite{behague2026minimum} recently determined the asymptotically optimal minimum vertex-degree threshold for a red-blue coloured $3$-graph to contain a colour-biased tight Hamilton cycle. They then proposed a threshold for every higher uniformity. For $k\ge3$, set
\begin{equation}\label{eq:dk}
 d_k:=1-\frac{k-1}{2k}\left(1-\frac{1}{2k}\right)^{k-2}.
\end{equation}
Their Conjecture~6.1 extends to every $k\ge4$ a conjecture posed for $k=4$ in the perfect-matching work of H\`an et al.\ \cite{han2025colour}. We record it below as Conjecture~\ref{conj:BCHM}.

\begin{conj}\cite{behague2026minimum} \label{conj:BCHM}
For every $k\ge4$ and every $\alpha>0$, there exist $\delta>0$ and $n_0\in\mathbb N$ such that the following holds. If $H$ is a red-blue coloured $k$-graph on $n\ge n_0$ vertices and
\[
\delta_1(H)\ge (d_k+\alpha)\binom{n}{k-1},
\]
then $H$ contains a tight Hamilton cycle with at least $(1/2+\delta)n$ edges of one colour.
\end{conj}

The obstruction motivating Conjecture~\ref{conj:BCHM} partitions the vertex set into a large part and a part of size about $n/(2k)$, deletes the layer of $k$-sets meeting the smaller part in exactly two vertices, and colours the two adjacent layers so that every tight Hamilton cycle is colour-balanced. Its asymptotic relative minimum vertex degree is $d_k$. Our main observation is that this construction is only the boundary member of a natural one-parameter family. Moving the active layers into the interior allows a second forbidden layer to be used, and the resulting barrier can have a strictly larger minimum vertex degree.

To state the family, fix $k\ge3$ and $a\in\{0,\ldots,k-1\}$, and let
\[
X_{k,a}\sim\operatorname{Bin}\!\left(k-1,\frac{2a+1}{2k}\right).
\]
With $\mathbb P(X_{k,a}=j)=0$ understood for $j\notin\{0,\ldots,k-1\}$, define
\begin{equation}\label{eq:dka}
\begin{split}
 d_{k,a}:=1-\max\bigl\{&
 \mathbb P\bigl(X_{k,a}\in\{a-1,a+2\}\bigr),\\
 &\mathbb P\bigl(X_{k,a}\in\{a-2,a+1\}\bigr)
 \bigr\}.
\end{split}
\end{equation}
For $a=0$, the first probability in \eqref{eq:dka} is $\mathbb P(X_{k,0}=2)$ and the second is $\mathbb P(X_{k,0}=1)$. Since $X_{k,0}\sim\operatorname{Bin}(k-1,1/(2k))$,
\[
\frac{\mathbb P(X_{k,0}=2)}{\mathbb P(X_{k,0}=1)}
=\frac{k-2}{2(2k-1)}<1.
\]
Hence
\[
d_{k,0}=1-\mathbb P(X_{k,0}=1)=d_k.
\]
Thus \eqref{eq:dka} genuinely extends \eqref{eq:dk}.

\begin{theorem}\label{thm:main}
Let $k\ge3$, let $a\in\{0,\ldots,k-1\}$, and let $n$ be divisible by $2k$. There exists a red-blue coloured $k$-graph $H_{k,a}(n)$ on $n$ vertices with the following properties:
\begin{enumerate}[(i)]
\item $H_{k,a}(n)$ contains a tight Hamilton cycle;
\item every tight Hamilton cycle $C$ in $H_{k,a}(n)$ satisfies $c(C)=0$;
\item for fixed $k$ and $a$, as $n\to\infty$ through multiples of $2k$,
\[
\delta_1\bigl(H_{k,a}(n)\bigr)
=\bigl(d_{k,a}+o(1)\bigr)\binom{n}{k-1}.
\]
\end{enumerate}
\end{theorem}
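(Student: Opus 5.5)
The plan is to build $H_{k,a}(n)$ as a two-part layered construction. Partition the vertex set as $V=A\cup B$ with $|A|=\frac{2a+1}{2k}n$; this is an integer because $2k\mid n$. For a $k$-set $e$, call $|e\cap A|$ its \emph{layer}. Let $H_{k,a}(n)$ consist of all $k$-sets whose layer does not lie in $\{a-1,a+2\}$. Colour layer $a$ red, colour layer $a+1$ blue, and colour every other layer arbitrarily, say red.

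The key observation is about how layers change along a tight cycle. Two consecutive edges $\{v_i,\dots,v_{i+k-1}\}$ and $\{v_{i+1},\dots,v_{i+k}\}$ differ in exactly one vertex, so their layers differ by at most $1$. Hence the sequence of layers along a tight Hamilton cycle $C$ cannot jump over a missing layer. Since layers $a-1$ and $a+2$ are absent, all edges of $C$ lie in one of three blocks: layers $\le a-2$, layers in $\{a,a+1\}$, or layers $\ge a+3$.

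Now double count. Every vertex lies in exactly $k$ edges of $C$, so
\[
\sum_{e\in E(C)}|e\cap A| = k|A| = \tfrac{2a+1}{2}\,n .
\]
Thus the average layer over the $n$ edges is exactly $a+\tfrac12$. This rules out the blocks $\le a-2$ and $\ge a+3$. So every edge of $C$ is in layer $a$ or $a+1$, and the average forces the two layers to appear equally often. This gives $n/2$ red and $n/2$ blue edges, so $c(C)=0$, which is (ii).

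For (i), I will order the vertices cyclically so that the $A$-vertices are as evenly spread as possible. Concretely, put vertex $i$ in $A$ iff $\lfloor (i+1)(2a+1)/(2k)\rfloor>\lfloor i(2a+1)/(2k)\rfloor$. This balanced sequence has the property that every cyclic window of length $k$ contains either $\lfloor a+\tfrac12\rfloor=a$ or $\lceil a+\tfrac12\rceil=a+1$ elements of $A$. Hence every window is an edge. The only check needed is the standard fact that such a sequence has discrepancy less than $1$ on every window, and this works cyclically because $n(2a+1)/(2k)$ is an integer.

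For (iii), fix a vertex $v$ and consider the $(k-1)$-sets $S$ of other vertices, sorted by $j=|S\cap A|$. Since $|A|/n\to p:=\frac{2a+1}{2k}$, the proportion of such sets with a given $j$ is $\mathbb P(X_{k,a}=j)+o(1)$.
\begin{itemize}
\item If $v\in B$, the edge $S\cup\{v\}$ has layer $j$. It is missing exactly when $j\in\{a-1,a+2\}$, so $\deg(v)/\binom{n}{k-1} = 1-\mathbb P(X_{k,a}\in\{a-1,a+2\})+o(1)$.
\item If $v\in A$, the layer is $j+1$. It is missing exactly when $j\in\{a-2,a+1\}$, so $\deg(v)/\binom{n}{k-1} = 1-\mathbb P(X_{k,a}\in\{a-2,a+1\})+o(1)$.
\end{itemize}
Both parts are nonempty for $0\le a\le k-1$. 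So the minimum over all vertices is exactly $d_{k,a}+o(1)$, matching the maximum in \eqref{eq:dka}. Edge cases such as $a=0$, where layer $a-1$ or $a-2$ is vacuous, are handled by the convention that out-of-range probabilities are $0$.

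The main obstacle is expected to be the cyclic window claim in (i). The layer-connectivity argument and the degree count are routine. The window claim needs a careful but elementary floor-function argument: the count of $A$-vertices in the window $[i,i+k)$ equals $\lfloor (i+k)p\rfloor-\lfloor ip\rfloor\in\{\lfloor kp\rfloor,\lceil kp\rceil\}$.
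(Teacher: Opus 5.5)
Your proposal is correct and is essentially the paper's proof with the names of the two parts swapped: your $A$, of size $\frac{2a+1}{2k}n$, is the paper's $B$. The steps match one for one: the same construction omitting layers $a-1$ and $a+2$; for (ii), the same step-size-one confinement combined with the average layer $a+\tfrac12$; for (i), the same floor-function word with $p=\frac{2a+1}{2k}$; and for (iii), the same hypergeometric-to-binomial count giving $1-\mathbb P(X_{k,a}\in\{a-1,a+2\})$ and $1-\mathbb P(X_{k,a}\in\{a-2,a+1\})$ for the two vertex classes. Your handling of cyclic wrap-around via $np\in\mathbb Z$, which makes the word $n$-periodic, is an equivalent substitute for the paper's use of $2k$-periodicity together with $2k\mid n$.
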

At uniformity $17$, an interior member of the family already disproves Conjecture~\ref{conj:BCHM}.

\begin{cor}\label{cor:k17}
Conjecture~\ref{conj:BCHM} is false. More precisely,
\[
d_{17,8}=\frac{5761}{8192}
>1-\frac{8}{17}\left(\frac{33}{34}\right)^{15}=d_{17}.
\]
Consequently, there exists a fixed $\alpha>0$ and arbitrarily large red-blue coloured $17$-graphs $H$ satisfying
\[
\delta_1(H)\ge(d_{17}+\alpha)\binom{n}{16}
\]
such that every tight Hamilton cycle in $H$ has exactly $n/2$ red edges and $n/2$ blue edges.
\end{cor}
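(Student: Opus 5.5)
The corollary should follow from Theorem~\ref{thm:main} applied with $k=17$ and $a=8$. Only two things need work: evaluating $d_{17,8}$ exactly, and checking rigorously that it exceeds $d_{17}$.

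\textbf{Computing $d_{17,8}$.} For $k=17$, $a=8$ the success probability is $(2a+1)/(2k)=17/34=1/2$. Hence $X_{17,8}\sim\operatorname{Bin}(16,1/2)$ and $\mathbb P(X_{17,8}=j)=\binom{16}{j}2^{-16}$. The two sets in \eqref{eq:dka} are $\{7,10\}$ and $\{6,9\}$. By the symmetry $\binom{16}{j}=\binom{16}{16-j}$, both have mass $(\binom{16}{7}+\binom{16}{6})2^{-16}$. Since
\[
\binom{16}{7}+\binom{16}{6}=11440+8008=19448,
\]
the common value is $19448/65536=2431/8192$. Therefore $d_{17,8}=1-2431/8192=5761/8192\approx0.70325$.

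\textbf{Comparing with $d_{17}$.} By \eqref{eq:dk}, $d_{17}=1-\frac{8}{17}(33/34)^{15}$, so I must show $\frac{8}{17}(33/34)^{15}>2431/8192\approx0.29675$. Put $x=1/34$. The alternating binomial truncation ending at an odd-order term is a lower bound, which gives
\[
(1-x)^{15}\ge 1-15x+\binom{15}{2}x^2-\binom{15}{3}x^3 .
\]
With $x=1/34$ this is at least $0.6380$, so $\frac{8}{17}(33/34)^{15}\ge 0.3002>0.2968$. In the write-up I would clear denominators and state this as an exact rational inequality rather than relying on decimals. The point needing care is the direction of the truncation bound; alternatively one can simply compare $8\cdot 33^{15}\cdot 8192$ with $2431\cdot 17\cdot 34^{15}$ as integers. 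Numerically, $d_{17}\approx0.69928$.

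\textbf{Refuting the conjecture.} Set $\alpha:=(d_{17,8}-d_{17})/2>0$. By Theorem~\ref{thm:main}(iii), for all sufficiently large $n$ divisible by $34$,
\[
\delta_1\bigl(H_{17,8}(n)\bigr)\ge (d_{17,8}-\alpha)\binom{n}{16}=(d_{17}+\alpha)\binom{n}{16}.
\]
By part (i), $H_{17,8}(n)$ contains a tight Hamilton cycle. By part (ii), every tight Hamilton cycle $C$ has $c(C)=0$. Since $C$ has $n$ edges, it therefore has exactly $n/2$ red and $n/2$ blue edges.

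This contradicts Conjecture~\ref{conj:BCHM} for $k=17$ with this $\alpha$. Whatever $\delta>0$ and $n_0$ are chosen, some $n\ge n_0$ divisible by $34$ gives a $17$-graph with $\delta_1(H)\ge(d_{17}+\alpha)\binom{n}{16}$ in which no tight Hamilton cycle has $(1/2+\delta)n$ edges of one colour. Infinitely many such $n$ is enough, since the conjecture claims the property for every $n\ge n_0$.
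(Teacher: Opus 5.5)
Your proposal is correct and follows the paper's proof essentially verbatim. Like the paper, you specialise Theorem~\ref{thm:main} to $k=17$, $a=8$, use the symmetry of $\operatorname{Bin}(16,1/2)$ to get $d_{17,8}=5761/8192$, verify $d_{17,8}>d_{17}$, and take $\alpha$ below the gap. The only cosmetic differences are how the inequality is certified and how $\alpha$ is chosen: you use the (valid) third-order alternating bound $(1-x)^{15}\ge 1-15x+\binom{15}{2}x^2-\binom{15}{3}x^3$, giving $\frac{8}{17}\cdot\frac{25079}{39304}>\frac{2431}{8192}$, rather than clearing denominators directly, and you set $\alpha$ to half the gap instead of the paper's $\alpha=3/1000$.
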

Interior layers also change the large-uniformity behaviour.
\begin{cor}\label{cor:largek}
As $k\to\infty$,
\[
1-\max_{0\le a\le k-1}d_{k,a}=O(k^{-1/2}),
\]
whereas
\[
d_k\longrightarrow 1-\frac{e^{-1/2}}{2}.
\]
\end{cor}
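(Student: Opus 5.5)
The second assertion is a direct limit computation, and the first follows by choosing a single interior layer $a\approx k/2$ and bounding point probabilities of a binomial with variance of order $k$.

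\emph{The limit of $d_k$.} From \eqref{eq:dk} we have $\frac{k-1}{2k}\to\frac12$. Moreover
\[
\left(1-\tfrac{1}{2k}\right)^{k-2}=\exp\bigl((k-2)\log(1-\tfrac1{2k})\bigr)=\exp\bigl(-\tfrac12+O(k^{-1})\bigr)\to e^{-1/2}.
\]
Hence $d_k\to 1-\frac12 e^{-1/2}$.

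\emph{The interior bound.} Since each $d_{k,a}\le 1$, the quantity $1-\max_a d_{k,a}$ is nonnegative. It therefore suffices to exhibit one layer $a$ for which both probabilities in \eqref{eq:dka} are $O(k^{-1/2})$. I will take $a=\lfloor k/2\rfloor$. Write $m=k-1$ and $p=(2a+1)/(2k)$, so that $p=\frac12+O(k^{-1})$ and $q=1-p=\frac12+O(k^{-1})$. Each of the two probabilities in \eqref{eq:dka} is a sum of at most two point masses $\mathbb P(X_{k,a}=j)$ with $j\in\{a-2,a-1,a+1,a+2\}$. Every such $j$ satisfies $j=m/2+O(1)$, so it is enough to show that $\mathbb P(X=j)=O(m^{-1/2})$ uniformly for these $j$.

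\emph{The point-mass estimate.} I will apply Stirling's formula in the form $n!=\Theta(\sqrt n\,(n/e)^n)$ to $\binom{m}{j}$. This gives
\[
\binom{m}{j}p^jq^{m-j}=O\!\left(\sqrt{\frac{m}{j(m-j)}}\right)\exp\bigl(-m\,D(j/m\,\|\,p)\bigr),
\]
where $D$ is the binary relative entropy. Since $D\ge 0$, the exponential factor is at most $1$. Because $j$ and $m-j$ are both $m/2+O(1)$, the prefactor is $O(m^{-1/2})=O(k^{-1/2})$. Alternatively, one could cite the standard bound that the modal probability of $\operatorname{Bin}(m,p)$ is $O\bigl((mpq)^{-1/2}\bigr)$.

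Combining these, both probabilities in \eqref{eq:dka} are at most $2\cdot O(k^{-1/2})$, so $d_{k,a}\ge 1-O(k^{-1/2})$ for this $a$. This proves the first assertion.

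The only step requiring care is the uniform Stirling estimate. It is routine here precisely because all relevant $j$ lie within $O(1)$ of $m/2$, so no control of the binomial tails is needed.
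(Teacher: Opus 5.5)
Your proof is correct and follows essentially the same route as the paper. It picks a middle layer so that the binomial parameter is $\frac12+O(k^{-1})$, bounds each of the at most two point masses in each probability of \eqref{eq:dka} by $O(k^{-1/2})$ via Stirling, and computes the limit of $d_k$ directly. The differences are cosmetic: for even $k$ you take $a=k/2$ rather than the paper's $a=k/2-1$, and you only need the Stirling bound for $j$ within $O(1)$ of $(k-1)/2$, not the paper's uniform bound on $\max_j\mathbb P(X=j)$, which makes that step slightly more explicit.
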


The rest of the paper is organised as follows. In Section~\ref{sec:barriers} we introduce the layer barriers, prove the layer-confinement property, and show both that every tight Hamilton cycle is perfectly colour-balanced and that the construction is Hamiltonian. In Section~\ref{sec:degree} we compute the minimum vertex degree, complete the proof of Theorem~\ref{thm:main}, and then prove Corollaries~\ref{cor:k17} and~\ref{cor:largek}.

\section{Layer barriers}\label{sec:barriers}

Fix $k\ge3$ and $a\in\{0,\ldots,k-1\}$, and suppose that $2k$ divides $n$. Partition a set $V$ of $n$ vertices as
\[
V=A\sqcup B,
\qquad
|B|=\frac{2a+1}{2k}n.
\]
The \emph{type} of a $k$-set $e\subseteq V$ with respect to the ordered partition $(A,B)$ is $|e\cap B|$. Define $H_{k,a}(n)$ by
\begin{equation}\label{eq:construction}
E\bigl(H_{k,a}(n)\bigr)
:=\left\{e\in\binom{V}{k}:|e\cap B|\notin\{a-1,a+2\}\right\}.
\end{equation}
Types outside $\{0,\ldots,k\}$ are void. Colour every edge of type $a$ red and every edge of type $a+1$ blue; colour all remaining edges arbitrarily. The two omitted types separate the possible intersection sizes into three intervals, and this simple separation is the mechanism behind the construction.
\begin{lemma}\label{lem:confinement}
Let $C$ be a tight cycle in $H_{k,a}(n)$, and write its edges in cyclic order as $e_1,\ldots,e_m$. Then
\[
\{|e_i\cap B|:1\le i\le m\}
\]
is contained in one of
\[
\{0,\ldots,a-2\},
\qquad
\{a,a+1\},
\qquad
\{a+3,\ldots,k\},
\]
where empty intervals are ignored.
\end{lemma}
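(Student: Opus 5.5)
The argument rests on one local observation. In a tight cycle, the types of consecutive edges differ by at most one. After that, the forbidden types force the result by a discrete intermediate value argument.

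First, fix $i$ and compare $e_i$ with $e_{i+1}$ (indices modulo $m$). Write the cycle's vertex sequence as $v_1,\ldots,v_m$ with $e_i=\{v_i,\ldots,v_{i+k-1}\}$. Then
\[
e_{i+1}=\bigl(e_i\setminus\{v_i\}\bigr)\cup\{v_{i+k}\}.
\]
Removing one vertex and adding one vertex changes $|e\cap B|$ by $-1$, $0$ or $+1$. Hence
\[
\bigl||e_{i+1}\cap B|-|e_i\cap B|\bigr|\le 1 .
\]
One should check that $e_i$ and $e_{i+1}$ are distinct $k$-sets, so that the cycle is genuinely tight. This holds when $m\ge k+1$, which is implicit in the notion of a tight cycle. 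Even without it, the bound above holds trivially.

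Second, let $T=\{t_i:=|e_i\cap B|\}$. Every $e_i$ is an edge of $H_{k,a}(n)$, so by \eqref{eq:construction} no $t_i$ lies in $\{a-1,a+2\}$. Suppose two edges have types in different intervals among
\[
I_1=\{0,\ldots,a-2\},\qquad I_2=\{a,a+1\},\qquad I_3=\{a+3,\ldots,k\}.
\]
Then some forbidden value lies strictly between those two types. Walk along the cycle from the first edge to the second. The integer sequence $t_i$ moves in steps of size at most $1$, so it must pass through every intermediate integer. In particular it hits $a-1$ or $a+2$, which is a contradiction.

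Third, the three intervals together with $\{a-1,a+2\}$ cover $\{0,\ldots,k\}$. So every type lies in some $I_j$, and by the second step all types lie in the same $I_j$. Empty intervals, for instance $I_1$ when $a\le 1$, never occur as actual types and can be ignored. There is no real obstacle here; the only care needed is the bookkeeping of boundary cases such as $a=0$ or $a+2>k$, where a forbidden type is void. In those cases the corresponding interval is either empty or still separated from $I_2$ by the excluded value, so the argument goes through unchanged.
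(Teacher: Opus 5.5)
Your proof is correct and follows essentially the same route as the paper: consecutive windows of the cyclic vertex ordering differ in type by at most one, and the excluded types $a-1$ and $a+2$ prevent the cyclic type sequence from moving between the components $\{0,\ldots,a-2\}$, $\{a,a+1\}$ and $\{a+3,\ldots,k\}$ of $\{0,\ldots,k\}\setminus\{a-1,a+2\}$. Your explicit check of the boundary cases ($a\le 1$ or $a+2>k$) is a harmless addition that the paper leaves implicit.
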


\begin{proof}
Set $s_i:=|e_i\cap B|$, with indices taken modulo $m$. Consecutive edges of a tight cycle are consecutive windows of length $k$ in its cyclic vertex ordering. Thus one vertex leaves the window and one enters, so $|s_{i+1}-s_i|\le1$ for every $i$. By \eqref{eq:construction}, neither $a-1$ nor $a+2$ can occur among the $s_i$. Hence the cyclic sequence $s_1,\ldots,s_m$ cannot move between distinct connected components of
\[
\{0,\ldots,k\}\setminus\{a-1,a+2\}.
\]
These components are precisely the three intervals in the statement.
\end{proof}

\begin{prop}\label{prop:balanced}
Every tight Hamilton cycle $C$ in $H_{k,a}(n)$ has exactly $n/2$ red edges and $n/2$ blue edges.
\end{prop}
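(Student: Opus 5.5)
Let $C$ be a tight Hamilton cycle in $H_{k,a}(n)$ with vertex ordering $v_1,\ldots,v_n$ and edges $e_i=\{v_i,\ldots,v_{i+k-1}\}$, and set $s_i:=|e_i\cap B|$. The plan is a double count of $\sum_i s_i$ combined with Lemma~\ref{lem:confinement}. Each vertex lies in exactly $k$ of the windows $e_1,\ldots,e_n$, so
\[
\sum_{i=1}^n s_i = k|B| = \frac{2a+1}{2}\,n ,
\]
and the average type along $C$ is exactly $a+\tfrac12$.

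By Lemma~\ref{lem:confinement}, all $s_i$ lie in one of the three intervals $\{0,\ldots,a-2\}$, $\{a,a+1\}$, $\{a+3,\ldots,k\}$. We rule out the outer two using the average. If all $s_i\le a-2$, then $\sum s_i\le (a-2)n<(a+\tfrac12)n$, a contradiction. If all $s_i\ge a+3$, then $\sum s_i\ge (a+3)n>(a+\tfrac12)n$, again a contradiction. Hence every $s_i\in\{a,a+1\}$, so every edge of $C$ has type $a$ (red) or type $a+1$ (blue); the arbitrarily coloured edges never occur.

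Now let $r$ be the number of red edges. Then
\[
\sum s_i = ra + (n-r)(a+1) = (a+1)n - r .
\]
Setting this equal to $(a+\tfrac12)n$ gives $r=n/2$, so there are $n/2$ blue edges as well, and $c(C)=0$.

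There is no real obstacle in this argument. The only points to check are that each vertex is counted exactly $k$ times, which needs $n\ge k$ so that the windows are distinct consecutive $k$-sets, and that the degenerate cases $a=0$ and $a=k-1$ are still covered. In those cases some intervals in Lemma~\ref{lem:confinement} are empty, and the type $a+1=k$ must be attainable; the same inequalities handle both cases without change.
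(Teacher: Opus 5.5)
Your proposal is correct and follows the paper's proof essentially step for step. You double-count to get average type $a+\tfrac12$, use Lemma~\ref{lem:confinement} together with that average to exclude the two outer intervals, and then solve a linear equation; the only difference is that you solve for the number of type-$a$ (red) edges, whereas the paper solves for the number of type-$(a+1)$ (blue) edges.
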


\begin{proof}
Let $e_1,\ldots,e_n$ be the edges of $C$ in cyclic order and set $s_i:=|e_i\cap B|$. Every vertex belongs to exactly $k$ edges of a tight Hamilton cycle. Double-counting incidences between $B$ and $E(C)$ gives
\begin{equation}\label{eq:average}
\frac1n\sum_{i=1}^n s_i
=\frac{k|B|}{n}
=a+\frac12.
\end{equation}
By Lemma~\ref{lem:confinement}, all $s_i$ lie in one of the three intervals appearing there. If they all lie in the lower interval, their average is at most $a-2$; if they all lie in the upper interval, their average is at least $a+3$. Both alternatives contradict \eqref{eq:average}. Hence
\[
s_i\in\{a,a+1\}
\]
for every $i$.

Let $x$ be the number of edges of type $a+1$. Then \eqref{eq:average} gives
\[
a(n-x)+(a+1)x=\left(a+\frac12\right)n,
\]
so $x=n/2$. Edges of type $a$ are red and edges of type $a+1$ are blue, proving the claim.
\end{proof}
For completeness, we verify that the construction itself contains a tight Hamilton cycle. Thus the obstruction is genuinely a colour-balance obstruction rather than a failure of Hamiltonicity.

\begin{prop}\label{prop:hamiltonian}
If $2k$ divides $n$, then $H_{k,a}(n)$ contains a tight Hamilton cycle.
\end{prop}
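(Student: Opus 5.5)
The plan is to write down an explicit cyclic ordering of $V$ in which every window of $k$ consecutive vertices meets $B$ in exactly $a$ or $a+1$ vertices. Such windows have type in $\{a,a+1\}$, which avoids the two forbidden types $a-1$ and $a+2$ in \eqref{eq:construction}. Hence every such window is an edge of $H_{k,a}(n)$, and the ordering is a tight Hamilton cycle. This matches Proposition~\ref{prop:balanced}, which shows that any tight Hamilton cycle must have all its windows of type $a$ or $a+1$. So the task reduces to distributing the $B$-vertices as evenly as possible around the cycle.

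To do this, I would use a balanced (mechanical/Sturmian) $0/1$-word. Set $\rho:=\frac{2a+1}{2k}\in(0,1)$ and, for $i\in\{0,\ldots,n-1\}$, define
\[
w_i:=\bigl\lfloor (i+1)\rho\bigr\rfloor-\bigl\lfloor i\rho\bigr\rfloor\in\{0,1\}.
\]
Since $2k$ divides $n$, the quantity $n\rho$ is an integer. Hence $w$ extends periodically with period $n$: in fact $w_{i+2k}=w_i$ because $2k\rho=2a+1$ is an integer, so the indices may be read modulo $n$. The total number of ones is $\lfloor n\rho\rfloor=n\rho=|B|$. I then list the vertices $v_0,\ldots,v_{n-1}$ cyclically, placing a vertex of $B$ at each position with $w_i=1$ and a vertex of $A$ at each position with $w_i=0$; the counts match exactly.

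For any window $\{i,\ldots,i+k-1\}$, taken cyclically, the number of $B$-vertices telescopes to $\lfloor (i+k)\rho\rfloor-\lfloor i\rho\rfloor$. Using the periodicity above, this remains valid for windows that wrap around, by shifting indices by a multiple of $2k$ (equivalently of $n$). Since $k\rho=a+\tfrac12$, this difference lies in $\{\lfloor a+\tfrac12\rfloor,\lceil a+\tfrac12\rceil\}=\{a,a+1\}$. So every window has type $a$ or $a+1$ and is an edge, which proves the proposition.

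The only point needing care is the cyclic wrap-around, which is handled by the periodicity coming from $2k\mid n$. The degenerate boundary cases are also fine: for $a=0$ every window has $0$ or $1$ vertices of $B$, and for $a=k-1$ every window has $k-1$ or $k$. I also implicitly need $n\ge k$, which holds since $n\ge 2k$.
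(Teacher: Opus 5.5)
Your proposal is correct and follows the paper's proof essentially verbatim. Both use the same $2k$-periodic mechanical word $w_i=\lfloor (i+1)\rho\rfloor-\lfloor i\rho\rfloor$ with $\rho=(2a+1)/(2k)$, count the ones by telescoping to get exactly $|B|$, and show that every window sum $\lfloor (i+k)\rho\rfloor-\lfloor i\rho\rfloor$ lies in $\{a,a+1\}$ because $k\rho=a+\tfrac12$, with wrap-around windows handled by periodicity and $2k\mid n$.
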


\begin{proof}
Set $r:=2a+1$ and define a bi-infinite binary sequence by
\[
w_i:=\left\lfloor\frac{(i+1)r}{2k}\right\rfloor
-\left\lfloor\frac{ir}{2k}\right\rfloor,
\qquad i\in\mathbb Z.
\]
Since $1\le r<2k$, each $w_i$ belongs to $\{0,1\}$. Moreover, $w_{i+2k}=w_i$ for every $i$, and telescoping over one period shows that a period of length $2k$ contains exactly $r$ ones. Since $2k\mid n$, repeating this period $n/(2k)$ times yields a cyclic binary word of length $n$ containing exactly $\frac{rn}{2k}=|B|$
ones.

For every $i\in\mathbb Z$, telescoping over a block of length $k$ gives
\[
\sum_{j=i}^{i+k-1}w_j
=\left\lfloor\frac{(i+k)r}{2k}\right\rfloor
-\left\lfloor\frac{ir}{2k}\right\rfloor.
\]
Writing $ir/(2k)=m+\theta$ with $m\in\mathbb Z$ and $0\le\theta<1$, and using $r/2=a+1/2$, the right-hand side equals
\[
\left\lfloor \theta+a+\frac12\right\rfloor\in\{a,a+1\}.
\]
Place the vertices of $B$ in the positions occupied by ones and the vertices of $A$ in the remaining positions. Because the binary sequence is $2k$-periodic and $2k\mid n$, the same calculation applies to cyclic intervals crossing the end of the word. Hence every cyclic interval of $k$ consecutive positions has type $a$ or $a+1$, and is therefore an edge of $H_{k,a}(n)$. The resulting cyclic ordering is a tight Hamilton cycle.
\end{proof}

\section{Degree estimates}\label{sec:degree}

As usual, we interpret $\binom{x}{y}$ as zero whenever $y<0$ or $y>x$.

\begin{prop}\label{prop:degree}
Let $H:=H_{k,a}(n)$. If $v\in A$, then
\begin{equation}\label{eq:degreeA}
\begin{split}
d_H(v)=\binom{n-1}{k-1}
&-\binom{|B|}{a-1}\binom{|A|-1}{k-a}\\
&-\binom{|B|}{a+2}\binom{|A|-1}{k-a-3}.
\end{split}
\end{equation}
If $v\in B$, then
\begin{equation}\label{eq:degreeB}
\begin{split}
d_H(v)=\binom{n-1}{k-1}
&-\binom{|B|-1}{a-2}\binom{|A|}{k-a+1}\\
&-\binom{|B|-1}{a+1}\binom{|A|}{k-a-2}.
\end{split}
\end{equation}
Consequently, for fixed $k$ and $a$,
\[
\delta_1(H)=\bigl(d_{k,a}+o(1)\bigr)\binom{n}{k-1}
\]
as $n\to\infty$ through multiples of $2k$.
\end{prop}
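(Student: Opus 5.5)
The plan is a direct count of non-edges through a fixed vertex, followed by a standard asymptotic evaluation of products of binomial coefficients. By \eqref{eq:construction}, a $k$-set containing $v$ fails to be an edge of $H$ exactly when its type is $a-1$ or $a+2$. These two events are disjoint, so
\[
d_H(v)=\binom{n-1}{k-1}-N_{a-1}(v)-N_{a+2}(v),
\]
where $N_t(v)$ denotes the number of $k$-sets of type $t$ containing $v$. Types outside $\{0,\ldots,k\}$ contribute zero, consistent with the convention that $\binom{x}{y}=0$ for $y<0$ or $y>x$.

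\textbf{Case $v\in A$.} A $k$-set of type $t$ through $v$ consists of $t$ vertices of $B$ and $k-1-t$ further vertices of $A\setminus\{v\}$. Hence
\[
N_t(v)=\binom{|B|}{t}\binom{|A|-1}{k-1-t}.
\]
Substituting $t=a-1$ and $t=a+2$ gives $k-1-t=k-a$ and $k-1-t=k-a-3$, which is \eqref{eq:degreeA}.

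\textbf{Case $v\in B$.} Now $v$ itself already accounts for one vertex of $B$. So a $k$-set of type $t$ through $v$ consists of $t-1$ vertices of $B\setminus\{v\}$ and $k-t$ vertices of $A$. This gives
\[
N_t(v)=\binom{|B|-1}{t-1}\binom{|A|}{k-t}.
\]
With $t=a-1$ and $t=a+2$ this yields \eqref{eq:degreeB}. The exact counts are routine bookkeeping; the only care needed is the off-by-one shift between the two cases.

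\textbf{Asymptotics.} Set $\beta:=|B|/n=(2a+1)/(2k)$, and fix $j\in\{0,\ldots,k-1\}$. Since $k$ is fixed, each binomial has the form $\binom{cn+O(1)}{i}=(cn)^i/i!\,(1+o(1))$. Therefore both $\binom{|B|}{j}\binom{|A|-1}{k-1-j}$ and $\binom{|B|-1}{j}\binom{|A|}{k-1-j}$ equal
\[
(1+o(1))\,\frac{\beta^j(1-\beta)^{k-1-j}n^{k-1}}{j!\,(k-1-j)!}
=(1+o(1))\,\mathbb P(X_{k,a}=j)\binom{n}{k-1}.
\]
For $j\notin\{0,\ldots,k-1\}$ both sides are zero. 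Consequently, every $v\in A$ has degree $(1-\mathbb P(X_{k,a}\in\{a-1,a+2\})+o(1))\binom{n}{k-1}$. Every $v\in B$ has degree $(1-\mathbb P(X_{k,a}\in\{a-2,a+1\})+o(1))\binom{n}{k-1}$, using $j=a-2$ and $j=a+1$ from the shifted indices in \eqref{eq:degreeB}.

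Both $A$ and $B$ are nonempty, since $1\le 2a+1\le 2k-1$. So $\delta_1(H)$ is the minimum of these two values, namely $(d_{k,a}+o(1))\binom{n}{k-1}$ by \eqref{eq:dka}. The main point needing attention is the index shift in the $B$ case: it is exactly what makes the second probability in \eqref{eq:dka} involve $\{a-2,a+1\}$ rather than $\{a-1,a+2\}$.
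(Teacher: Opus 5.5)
Your proposal is correct and follows essentially the same route as the paper. It counts the $k$-sets through $v$ of type $a-1$ or $a+2$, with the shift to $\{a-2,a+1\}$ for $v\in B$, and then compares each product of binomials with the corresponding point probability of $X_{k,a}$ for fixed $k$ and $a$. The only step you leave implicit is $\binom{n-1}{k-1}=(1+o(1))\binom{n}{k-1}$ for the leading term; the paper states this explicitly, and it is immediate.
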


\begin{proof}
There are $\binom{n-1}{k-1}$ $k$-sets containing a fixed vertex. Suppose first that $v\in A$. Such a $k$-set is absent from $H$ precisely when its remaining $k-1$ vertices contain either $a-1$ or $a+2$ vertices of $B$, which gives \eqref{eq:degreeA}. If $v\in B$, the remaining vertices of an omitted edge contain either $a-2$ or $a+1$ vertices of $B$, giving \eqref{eq:degreeB}.
Set $q:=\frac{2a+1}{2k},$
so that $|B|=qn$ and $|A|=(1-q)n$. For every fixed $t\in\{0,\ldots,k-1\}$, expansion in falling factorials gives
\begin{equation}\label{eq:hypergeomA}
\frac{\binom{|B|}{t}\binom{|A|-1}{k-1-t}}{\binom{n-1}{k-1}}
=\binom{k-1}{t}q^t(1-q)^{k-1-t}+O_{k,a}(n^{-1}),
\end{equation}
and similarly
\begin{equation}\label{eq:hypergeomB}
\frac{\binom{|B|-1}{t}\binom{|A|}{k-1-t}}{\binom{n-1}{k-1}}
=\binom{k-1}{t}q^t(1-q)^{k-1-t}+O_{k,a}(n^{-1}).
\end{equation}
Terms whose lower binomial parameter lies outside its natural range vanish identically. Hence, applying \eqref{eq:hypergeomA} and \eqref{eq:hypergeomB} to the remaining terms and dividing by $\binom{n-1}{k-1}$, the degree in \eqref{eq:degreeA} tends to
\[
1-\mathbb P\bigl(X_{k,a}\in\{a-1,a+2\}\bigr),
\]
while the degree in \eqref{eq:degreeB} tends to
\[
1-\mathbb P\bigl(X_{k,a}\in\{a-2,a+1\}\bigr).
\]
Their minimum is $d_{k,a}$ by \eqref{eq:dka}. Finally,
\[
\binom{n-1}{k-1}
=\bigl(1+O_k(n^{-1})\bigr)\binom{n}{k-1},
\]
which proves the stated asymptotic formula.
\end{proof}

\begin{proof}[Proof of Theorem~\ref{thm:main}]
Take $H_{k,a}(n)$ as defined in \eqref{eq:construction}. Proposition~\ref{prop:hamiltonian} gives a tight Hamilton cycle, Proposition~\ref{prop:balanced} proves that every tight Hamilton cycle has colour sum zero, and Proposition~\ref{prop:degree} gives the asserted minimum vertex degree.
\end{proof}

\subsection{The counterexample at uniformity seventeen}

\begin{proof}[Proof of Corollary~\ref{cor:k17}]
Take $k=17$ and $a=8$. Then
\[
X_{17,8}\sim\operatorname{Bin}(16,1/2).
\]
By symmetry of the binomial distribution,
\[
\mathbb P\bigl(X_{17,8}\in\{7,10\}\bigr)
=\mathbb P\bigl(X_{17,8}\in\{6,9\}\bigr),
\]
and hence
\begin{align*}
d_{17,8}
&=1-2^{-16}\left(\binom{16}{7}+\binom{16}{10}\right)\\
&=1-\frac{19448}{65536}
=\frac{5761}{8192}
\approx0.7032470703.
\end{align*}
On the other hand, \eqref{eq:dk} gives
\[
d_{17}=1-\frac{8}{17}\left(\frac{33}{34}\right)^{15}
\approx0.6992771642.
\]
More precisely,
\begin{equation}\label{eq:gap}
d_{17,8}-d_{17}
=\frac{8}{17}\left(\frac{33}{34}\right)^{15}-\frac{2431}{8192}
>\frac{39}{10000},
\end{equation}
where the last inequality follows by clearing denominators.

Fix $\alpha:=3/1000$. By Theorem~\ref{thm:main} and \eqref{eq:gap}, for every sufficiently large multiple of $34$ the graph $H_{17,8}(n)$ satisfies
\[
\delta_1\bigl(H_{17,8}(n)\bigr)
\ge(d_{17}+\alpha)\binom{n}{16}.
\]
Nevertheless, every tight Hamilton cycle in $H_{17,8}(n)$ has exactly $n/2$ edges of each colour. Therefore, for this fixed $\alpha$, the conclusion of Conjecture~\ref{conj:BCHM} fails for every $\delta>0$ and arbitrarily large $n$. This disproves the conjecture.
\end{proof}

\subsection{Large uniformity}

\begin{proof}[Proof of Corollary~\ref{cor:largek}]
If $k$ is odd, choose $a=(k-1)/2$; if $k$ is even, choose $a=k/2-1$. In the first case
\[
\frac{2a+1}{2k}=\frac12,
\]
and in the second case
\[
\frac{2a+1}{2k}=\frac12-\frac{1}{2k}.
\]
Thus, in both cases, if $X:=X_{k,a}$ then its binomial parameter is $1/2+O(k^{-1})$. In particular, this parameter stays in a fixed compact subinterval of $(0,1)$ for all sufficiently large $k$. Stirling's formula therefore gives the uniform local estimate
\[
\max_{j\in\mathbb Z}\mathbb P(X=j)
=O(k^{-1/2}).
\]
Each of the two probabilities appearing inside the maximum in \eqref{eq:dka} is a sum of two point probabilities. Hence
\[
1-d_{k,a}=O(k^{-1/2}).
\]
Since $d_{k,b}\le1$ for every $b$,
\[
0\le 1-\max_{0\le b\le k-1}d_{k,b}
\le 1-d_{k,a}
=O(k^{-1/2}),
\]
which proves the first assertion. For the second assertion, \eqref{eq:dk} gives
\[
\frac{k-1}{2k}\longrightarrow\frac12,
\qquad
\left(1-\frac{1}{2k}\right)^{k-2}\longrightarrow e^{-1/2},
\]
and therefore
\[
d_k\longrightarrow1-\frac{e^{-1/2}}2.
\]
\end{proof}

\section*{Acknowledgements}
This work was supported by the National Natural Science Foundation of China (No.~12161073), the GDUPT Talent Recruitment Project (No.~2024rcyj1007), and the Scientific Research Funds at China University of Geosciences (Wuhan) (Project No.~2026039).

\bibliographystyle{abbrv}
\bibliography{main}
\end{document}